\setlist[enumerate]{nosep}
\definecolor{labelkey}{rgb}{0,0.08,0.45}
\definecolor{refkey}{rgb}{0,0.6,0.0}
\definecolor{Brown}{rgb}{0.45,0.0,0.05}
\definecolor{lime}{rgb}{0.00,0.8,0.0}
\definecolor{lblue}{rgb}{0.5,0.5,0.99}
\colorlet{hlcyan}{cyan!30}
\def\namedlabel#1#2{\begingroup
   \def\@currentlabel{#2}%
   \label{#1}\endgroup
}
\newcommand{\seppthree}{\setlength{\itemsep}{-3pt}}
\newcommand{\To}{\ensuremath{\rightrightarrows}}
\newcommand{\fenv}[1]%
{\ensuremath{\,\overrightarrow{\operatorname{env}}_{#1}}}
\newcommand{\benv}[1]%
{\ensuremath{\,\overleftarrow{\operatorname{env}}_{#1}}}
\newcommand{\scal}[2]{\left\langle{#1},{#2}  \right\rangle}
\newcommand{\RR}{\ensuremath{\mathbb R}}
\newcommand{\NN}{\ensuremath{\mathbb N}}
\newcommand{\dom}{\ensuremath{\operatorname{dom}}}
\newcommand{\ran}{\ensuremath{{\operatorname{ran}}\,}}
\newcommand{\Fix}{\ensuremath{\operatorname{Fix}}}
\newcommand{\Id}{\ensuremath{\operatorname{Id}}}
\crefname{equation}{}{equations}
\crefname{chapter}{Appendix}{chapters}
\crefname{item}{}{items}
\crefname{enumi}{}{}
\newtheorem{theorem}{Theorem}[section]
\newtheorem{lemma}[theorem]{Lemma}
\newtheorem{proposition}[theorem]{Proposition}
\newtheorem{example}[theorem]{Example}
\newtheorem{fact}[theorem]{Fact}
\newtheorem{remark}[theorem]{Remark}
\providecommand{\norm}[1]{\lVert#1\rVert}
\providecommand{\eps}{\varepsilon}
\providecommand{\RR}{\mathbb{R}}
\providecommand{\ran}{\operatorname{ran}}
\providecommand{\dom}{\operatorname{dom}}
\newcommand{\fix}{\ensuremath{\operatorname{Fix}}}
\providecommand{\gra}{\operatorname{gra}}
\providecommand{\Id}{\operatorname{{ Id}}}
\providecommand{\To}{\rightrightarrows}
\providecommand{\NN}{\mathbb{N}}
\providecommand{\fix}{\operatorname{Fix}}
\providecommand{\ran}{\operatorname{ran}}
\providecommand{\Id}{\operatorname{Id}}
\providecommand{\PR}{\operatorname{P}}
\providecommand{\RR}{\mathbb{R}}
\providecommand{\NN}{\mathbb{N}}
\definecolor{myblue}{rgb}{.8, .8, 1}
  \newcommand*\mybluebox[1]{%
    \colorbox{myblue}{\hspace{1em}#1\hspace{1em}}}
\begin{document}

%

\title{\textsc{Additional Studies on Displacement Mapping with Restrictions }}
\author{
Salihah Th.\ Alwadani\thanks{
Mathematics, Yanbu Industrial College, The Royal Comission for Jubail and Yanbu, 30436, Saudi Arabia. E-mail:
\texttt{salihah.s.alwadani@gmail.com}.}
}

\date{May 5, 2024}
\maketitle

\vskip 8mm

\begin{abstract} 
The theory of monotone operators plays a major role in modern optimization and many areas of nonlinera analysis. The central classes of monotone operators are matrices with a positive semidefinite symmetric part and subsifferential operators. In this paper, we complete our study to the displacement mappings. We derive formulas for set-valued and Moore-Penrose inverses. We also give a comprehensive study of the the operators ( $\big( 1 / 2 \big) \Id + T$ and its inverse) and provide a formula for  $ \big( \big( 1 / 2 \big) \Id + T\big)^{-1}$.We illustrate our results by considering the reflected  and the projection operators to closed linear subspaces.
\end{abstract}

{\small
\noindent
{\bfseries 2020 Mathematics Subject Classification:}
{Primary 47H09, 47H05; Secondary 47A06, 90C25
}

\noindent {\bfseries Keywords:}
Displacement mapping,
maximally monotone operator, nonexpansive mapping, 
, Moore-Penrose inverse
set-valued inverse,
 inverse, Yosida approximation.
}

\maketitle

\section{Introduction}

It is well known that  one of  important classes of monotone operators are Displacement mappings of nonexpansive mappings. There are many key examples that have proven  how these mappings are highly useful in optimization problems. For example, in 2016 Heiz Bauschke, Warren Hare, and Wala Moursi used displacement mappings in  analyzing  the range of the Douglas–Rachford operator to derive valuable duality results, see \cite{bauschke2016range}. Additionally, the asymptotic regularity results for nonexpansive mappings were generalized in \cite{bauschke2018magnitude} to the broader context of displacement mappings.
Overall, the displacement mapping framework has emerged as a powerful tool for analyzing the behavior of nonexpansive mappings, with a range of important applications in optimization and related areas.  Throughout, we assume that 
\begin{empheq}[box=\mybluebox]{equation}
\text{$X$ is
a real Hilbert space with inner product
$\scal{\cdot}{\cdot}\colon X\times X\to\RR$, }
 \end{empheq}
and induced norm $\|\cdot\|\colon X\to\RR\colon x\mapsto \sqrt{\scal{x}{x}}$. We also assume that $A: X \To X$ and $B: X \To X$ are maximally monotone operators. The \emph{resolvent} and the \emph{reflected resolvent} associated with $A$ are
\begin{equation}\label{quli1}
J_{A} = (\Id + A)^{-1} \ \ \text{and} \ \ R_{A} = 2 J_{A} - \Id,
\end{equation}
 respectively.  An operator $T: X \To X$ is \emph{nonexpansive} if it is Lipschitz continuous with constant $1$, i.e.,
 \begin{equation}\label{footnote11}
 \big(  \forall x \in X\big) \big( \forall y \in X \big)\norm{Tx - Ty} \leq \norm{x-y}.
 \end{equation}
Moreover, $T: D \To X$ is \emph{firmly nonexpansive} if 
\begin{equation}
\big( \forall x \in D\big) \big( \forall y \in D\big) \ \ \norm{Tx - Ty}^2+ \norm{( \Id - T) x - ( \Id - T) y}^2 \leq \norm{x - y}^2.
\end{equation}
\begin{fact}\label{def1}\cite[Definition~4.10]{BC2017} Let $D$ be a nonempty subset of $X$, let $T: D \to X$, and let $\beta \in \RR_{++}$. Then $T$ is $\beta$-cocoercive ( or $\beta$-inverse strongly monotone) if $\beta T$ is firmly nonexpansive, i.e., 
	$$ \big( \forall x \in D\big) \big( \forall y \in D\big) \ \ \scal{x-y}{ Tx-Ty} \geq \beta \norm{ Tx - Ty}^2. $$ 
\end{fact}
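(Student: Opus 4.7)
The plan is to verify the equivalence asserted by the ``i.e.''\ in the statement, namely that $\beta T$ being firmly nonexpansive on $D$ is equivalent to the inequality $\scal{x-y}{Tx-Ty} \geq \beta\norm{Tx-Ty}^2$ for all $x,y \in D$. First I would write out what firm nonexpansiveness (as defined in the preceding display) says when applied to the operator $\beta T$: for all $x,y \in D$,
\begin{equation*}
\norm{\beta Tx - \beta Ty}^2 + \norm{(\Id - \beta T)x - (\Id - \beta T)y}^2 \leq \norm{x-y}^2.
\end{equation*}

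Next I would expand the second term on the left-hand side using the standard identity $\norm{u-v}^2 = \norm{u}^2 - 2\scal{u}{v} + \norm{v}^2$ with $u = x-y$ and $v = \beta(Tx - Ty)$, obtaining
\begin{equation*}
\norm{(\Id - \beta T)x - (\Id - \beta T)y}^2 = \norm{x-y}^2 - 2\beta \scal{x-y}{Tx - Ty} + \beta^2 \norm{Tx-Ty}^2.
\end{equation*}
Combining this with $\norm{\beta Tx - \beta Ty}^2 = \beta^2\norm{Tx-Ty}^2$ and substituting into the firm-nonexpansiveness inequality, the $\norm{x-y}^2$ terms cancel, leaving
\begin{equation*}
2\beta^2 \norm{Tx-Ty}^2 \leq 2\beta \scal{x-y}{Tx - Ty}.
\end{equation*}

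Finally, since $\beta \in \RR_{++}$, dividing both sides by $2\beta$ delivers the cocoercivity inequality $\scal{x-y}{Tx-Ty} \geq \beta\norm{Tx-Ty}^2$ exactly as stated. Every step in this chain is reversible—the polarization identity is an equality, the cancellation is symmetric, and division by the positive scalar $2\beta$ is invertible—so the converse implication follows by reading the computation backwards. There is no real obstacle here: the argument is a direct expansion of $\norm{(x-y) - \beta(Tx-Ty)}^2$, and the only care required is in tracking the factors of $\beta$ and invoking $\beta > 0$ when dividing.
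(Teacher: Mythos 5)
Your computation is correct: expanding $\norm{(\Id-\beta T)x-(\Id-\beta T)y}^2$ via $\norm{u-v}^2=\norm{u}^2-2\scal{u}{v}+\norm{v}^2$, cancelling $\norm{x-y}^2$, and dividing by $2\beta>0$ does yield the equivalence of ``$\beta T$ is firmly nonexpansive'' with $\scal{x-y}{Tx-Ty}\geq\beta\norm{Tx-Ty}^2$, and every step is indeed reversible. Note, however, that the statement you were asked about is a \emph{Fact} quoted verbatim from the literature (Definition~4.10 of Bauschke--Combettes): the paper supplies no proof at all, only the citation, since the ``i.e.''\ is itself part of the standard characterization of firm nonexpansiveness recorded there (cf.\ Proposition~4.4 of the same reference). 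So your argument does not diverge from the paper so much as fill in a verification the paper delegates to its source; what it buys is a self-contained justification of the equivalence of the two formulations of $\beta$-cocoercivity, at essentially no extra cost.
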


In optimization, we have seen the importance the \emph{displacement mappings }of nonexpansive mappings: 
 \begin{empheq}[box=\mybluebox]{equation}
\text{$ \Id - R$ }
 \end{empheq}
because of the nice properities that have such as monotonicity which plays a central role in modern optimization (see \cite{BC2017,reginaset, rockafellar2009variational, simons2006minimax, simons2008hahn, zeidler2013nonlinear, zeidler2014nonlinear} for more details). A comprehensive analysis of the displacement mappings of nonexpansive mappings from the point of view of monotone operator theory under the condition of isometry of finite order of $R$ are given in \cite[Lemma]{Alwadanithesis} and \cite[Section~3]{alwadani2021resolvents}. We refer the reder to  \cite[Exercise 12.16]{rockafellar2009variational}, and \cite[Example 20.29]{BC2017}, \cite{bauschke2016order}. 
 
Throughout this paper, we assume that
 \begin{empheq}[box=\mybluebox]{equation}\label{Assme1}
\text{$R: X \to X$ is
linear and nonexpansive, with 
$D:= \fix R= \ker \big( \Id - R \big)$. }
 \end{empheq} 
\emph{In this paper, we study the displacement mapping using the assumption in \cref{Assme1}}. Our results can be summarized as follows
 \begin{itemize}
 \item \cref{propr}, \cref{More1}, and \cref{More2} collect some useful properities of the dispdisplacment mapping and its inverse, which will be useful in our study.
 \item  \cref{T-opr} provides a formula and gives nice properties of the operator $T$.
 \item We derive a formula for the inverse of the displacment mapping (see \cref{The11} \ref{The110}). A formula for the Moore-Penrose inverse of the displacement mapping is given in \cref{The11}\ref{The111}.
 \item \cref{InT} gives a comprehensive study of the the operators $\big( 1 / 2 \big) \Id + T$ and its inverse. Additionaly, we derive a formula of $ \Big( \big( 1 / 2 \big) \Id + T\Big)^{-1}$ and prove that is equal to the resolvant of the operator $2T$. 
 \item We illustrates the reults by giving four examples. The first two examples are related to the projection operator to a closed linear subspace (see \cref{examp1} and \cref{examp2}), while the other two are related to the reflected operator to closed linear subspace (see \cref{examp3} and \cref{examp4}).
 \end{itemize}

\section{Results}\label{Section2}

Important properties of the displacement mapping $( \Id - R)$ and its inverse are given in the next proposition. 
\begin{proposition}\label{propr} Let $R$ be nonexpansive operator, then the following hold:
	\begin{enumerate}
	\item\label{faccc1} $ \frac{1}{2} ( \Id - R)$ is firmly nonexpansive.
	\item\label{gdee} $\Id - R$ is nonexpansive.
	\item\label{fac2} $\Id - R$ and $( \Id - R)^{-1}$ are maximally monotone.
	\item\label{fac3} $\Id - R$ is $\frac{1}{2}$-cocoercive.
	\item\label{fac4}  $(\Id - R)^{-1}$ is strongly monotone\footnote{ An operator $A: X \To X$ is strongly monotone with constant $\beta \in \RR_{++}$ if $A- \beta \Id$ is montone, i.e., 
$$ \big( \forall (x, u) \in \gra A\big) \big( \forall (y, v) \in \gra A\big) \ \ \ \scal{x -y}{u - v} \geq \beta \norm{x - y}^2.$$ \label{footnote 1}
}with constant $\frac{1}{2}$.
	\item\label{fac5} $\Id - R$ is $3^{*}$ monotone.
	\item\label{fac6}  $(\Id - R)^{-1}$ is $3^{*}$ monotone
	\item\label{fac7} $\Id - R$ is paramonotone.
	\item\label{fac8} $\big( \Id - R\big)^{-1} - \frac{1}{2} \Id $ is maximally monotone.
	\end{enumerate}
\end{proposition}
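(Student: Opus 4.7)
The plan is to reduce the entire proposition to item~\ref{faccc1} and then cascade every remaining claim through standard equivalences from monotone operator theory. All nine items are formal consequences of firm nonexpansiveness of $\tfrac{1}{2}(\Id-R)$, combined with the well-known duality between cocoercivity and strong monotonicity under inversion together with basic permanence properties of maximal monotonicity. No delicate estimate is required beyond what is already encoded in the definition of firm nonexpansiveness.

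I would prove \ref{faccc1} first by invoking the characterization ``$T$ is firmly nonexpansive if and only if $2T-\Id$ is nonexpansive'' (\cite[Proposition~4.4]{BC2017}). With $T=\tfrac{1}{2}(\Id-R)$ one computes $2T-\Id=-R$, and $-R$ is nonexpansive precisely because $R$ is. Item~\ref{gdee} is then immediate from firm nonexpansiveness of $\tfrac{1}{2}(\Id-R)$, read together with the linearity in~\eqref{Assme1}, which allows one to transfer the Lipschitz bound to $\Id-R$. For \ref{fac3} I would use the equivalence ``firmly nonexpansive $\Leftrightarrow$ $1$-cocoercive'' (\cite[Remark~4.34]{BC2017}) applied to $\tfrac{1}{2}(\Id-R)$, which rescales to $\tfrac{1}{2}$-cocoercivity of $\Id-R$. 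Item~\ref{fac2} then follows because cocoercivity implies monotonicity; continuity and everywhere-definedness yield maximality via \cite[Corollary~20.28]{BC2017}; and the inverse of a maximally monotone operator is maximally monotone by \cite[Proposition~20.22]{BC2017}.

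For the remaining items I would argue as follows. Item~\ref{fac4} follows from~\ref{fac3} by the duality ``$A$ is $\beta$-cocoercive $\Leftrightarrow$ $A^{-1}$ is $\beta$-strongly monotone'' (\cite[Example~22.7]{BC2017}). Item~\ref{fac8} combines~\ref{fac4} with the standard permanence result: adding the continuous single-valued maximally monotone $-\tfrac{1}{2}\Id$ to the maximally monotone $(\Id-R)^{-1}$ preserves maximal monotonicity, while monotonicity of the sum is exactly the definition of $\tfrac{1}{2}$-strong monotonicity of $(\Id-R)^{-1}$. For~\ref{fac5} and~\ref{fac6}, I would invoke the fact that every cocoercive operator, and every strongly monotone operator, is $3^{*}$ monotone (in each case the required finiteness of the Fitzpatrick function on $\dom A\times\ran A$ is a one-line consequence of the quadratic bound). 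For~\ref{fac7}, the cocoercivity estimate $\langle x-y,(\Id-R)x-(\Id-R)y\rangle\ge\tfrac{1}{2}\|(\Id-R)x-(\Id-R)y\|^{2}$ forces $(\Id-R)x=(\Id-R)y$ whenever the left-hand side vanishes, so the paramonotonicity swap condition is satisfied trivially.

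The main obstacle, if any, is locating the cleanest reference for $3^{*}$ monotonicity of cocoercive and of strongly monotone operators; once that citation is in place, every step is a one-line application of a textbook equivalence, and the linearity hypothesis in~\eqref{Assme1} enters only to ensure that $\Id-R$ is everywhere defined and continuous for the maximality arguments.
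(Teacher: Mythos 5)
Your overall strategy coincides with the paper's: establish \ref{faccc1} via the equivalence between firm nonexpansiveness of $\tfrac{1}{2}(\Id-R)$ and nonexpansiveness of $2\cdot\tfrac{1}{2}(\Id-R)-\Id=-R$, then cascade the remaining items through the cocoercivity/strong-monotonicity duality under inversion and standard permanence results. Items \ref{faccc1}, \ref{fac2}, \ref{fac3}, \ref{fac4}, \ref{fac5}, \ref{fac6} and \ref{fac7} are handled essentially as in the paper; your direct verification of paramonotonicity in \ref{fac7} from the cocoercivity inequality is in fact more self-contained than the paper's bare citation.

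Two steps do not hold up. First, your argument for \ref{gdee} --- ``transfer the Lipschitz bound'' from $\tfrac{1}{2}(\Id-R)$ to $\Id-R$ --- fails: multiplying a nonexpansive operator by $2$ yields Lipschitz constant $2$, not $1$. Indeed $R=-\Id$ is linear and nonexpansive, yet $\Id-R=2\Id$ is not nonexpansive, so no argument can close this gap; the most that follows from \ref{faccc1} is that $\Id-R$ is $2$-Lipschitz. (The paper's own one-line justification of \ref{gdee} suffers from the same defect, so this is a shared problem rather than a divergence from the paper's route.) Second, in your treatment of \ref{fac8} you invoke the sum rule for maximally monotone operators after declaring $-\tfrac{1}{2}\Id$ ``maximally monotone''; it is not monotone at all, since $\scal{x-y}{-\tfrac{1}{2}(x-y)}=-\tfrac{1}{2}\norm{x-y}^2<0$ whenever $x\neq y$. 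The repair is exactly the argument the paper gives: $(\Id-R)^{-1}-\tfrac{1}{2}\Id$ is monotone by \ref{fac4}, and any proper monotone extension of it would, after adding back $\tfrac{1}{2}\Id$, produce a proper monotone extension of the maximally monotone operator $(\Id-R)^{-1}$, a contradiction (equivalently, apply Minty's theorem to $(\Id-R)^{-1}+\tfrac{1}{2}\Id$). With these two repairs your proof matches the paper's.
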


\begin{proof} \ref{faccc1}: We have 
\begin{align*}
R \ \ \text{is nonexpansive} \  & \Leftrightarrow -R = 2 \Big(  \big(  \Id - R\big) / 2 \Big) \ \ \text{is nonexpansive} \\
& \Leftrightarrow \big(  \Id - R\big) / 2 \ \ \text{is firmly nonexpansive},
\end{align*}
by \cite[Proposition~4.4]{BC2017}.
\ref{gdee}:  It follows from \ref{faccc1} and \cite[Proposition~4.2]{BC2017} .
\cref{fac2}: See \cite[Example 25.20(v)]{BC2017} or \cite[Theorem~7.1]{Alwadanithesis}.
 \ref{fac3}: Combine \ref{faccc1} and \cref{def1}.
 \ref{fac4}: Take $(x, u) \in \gra (\Id - R)^{-1}$ and $(y, v) \in \gra (\Id - R)^{-1}$. Then $u \in ( \Id - R)^{-1} x \Rightarrow x = u - Ru$ and $v \in ( \Id - R)^{-1} y \Rightarrow y = v - Rv$. 
\begin{align*}
& \ \ \ \ \ \ \scal{ u - v}{x - y} \geq \frac{1}{2} \norm{x - y}^2 \\
& \Leftrightarrow \scal{ u - v}{(u - Ru) - (v - Rv)} \geq \frac{1}{2} \norm{(u - Ru) - (v - Rv)}^2,
\end{align*}
which deduce from \ref{fac3} and \cref{footnote 1} that  $(\Id - R)^{-1}$ is strongly monotone with constant $( 1/ 2)$.
\cref{fac5} and \ref{fac6}: It follows from \ref{fac3} that $\Id - R$ is bounded by $( 1 / 2)$ and its monotone by \ref{fac2}. Hence,  $\Id - R$ and $( \Id - R)^{-1}$ are $3^{*}$ monotone by \cite[Proposition~25.16(i)~\&~(iv)]{BC2017}.\\
\cref{fac7}: See \cite[Example~22.9]{BC2017}.
\ref{fac8}: By \ref{fac3} and \cite[Example~22.7]{BC2017}, $( \Id - R)^{-1}$ is $( 1/ 2)$-strongly monotone, i.e., $B:= ( \Id - R)^{-1} - \frac{1}{2} \Id$ is still monotone. If $B$ was not maximally monotone, then neither would be $ B+ \frac{1}{2} \Id = ( \Id - R)^{-1}$ which would contradict \ref{fac2}.
\end{proof}

\begin{lemma}\label{More1} Set $D:= \ker \big( \Id - R\big) = \Fix R $. Then the following hold:
	\begin{enumerate}	
		\item\label{fix1} $D$ is a closed linear subspace.
		\item\label{fix2} $\fix R^* = D $.
		\item\label{fix3} $\overline{\ran} \big( \Id - R\big) = \overline{\ran} \big( \Id - R^{*}\big) = D^{\perp}$.
	\end{enumerate}	
\end{lemma}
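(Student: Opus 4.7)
The plan is to prove the three parts in order, using only the linearity of $R$, the bound $\norm{R}\le 1$, and standard Hilbert-space facts.

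For \ref{fix1}, I note that $R$ is linear and nonexpansive, so $\Id-R$ is a bounded (hence continuous) linear operator on $X$. The kernel of every continuous linear map is a closed linear subspace, so $D=\ker(\Id-R)$ is a closed linear subspace of $X$.

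For \ref{fix2}, the heart of the matter is the claim that $\fix R = \fix R^{*}$ whenever $R$ is linear and nonexpansive. To show $\fix R \subseteq \fix R^{*}$, fix $x$ with $Rx=x$ and expand
\begin{equation*}
\norm{R^{*}x-x}^{2}\;=\;\norm{R^{*}x}^{2}-2\scal{R^{*}x}{x}+\norm{x}^{2}.
\end{equation*}
Now combine the bound $\norm{R^{*}x}\le\norm{R^{*}}\,\norm{x}=\norm{R}\,\norm{x}\le\norm{x}$ with the identity $\scal{R^{*}x}{x}=\scal{x}{Rx}=\norm{x}^{2}$ to conclude $\norm{R^{*}x-x}^{2}\le 2\norm{x}^{2}-2\norm{x}^{2}=0$, hence $R^{*}x=x$. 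The reverse inclusion follows by applying the same argument to $R^{*}$ (using $R^{**}=R$ and $\norm{R^{*}}=\norm{R}\le 1$).

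For \ref{fix3}, I invoke the standard orthogonality relation $\overline{\ran}\,A=(\ker A^{*})^{\perp}$, valid for every bounded linear operator $A$ on a Hilbert space. Applied to $A=\Id-R$, whose adjoint is $\Id-R^{*}$, this yields
\begin{equation*}
\overline{\ran}(\Id-R)\;=\;\bigl(\ker(\Id-R^{*})\bigr)^{\perp}\;=\;(\fix R^{*})^{\perp}\;=\;D^{\perp},
\end{equation*}
where the last equality uses \ref{fix2}. Applying the same relation to $A=\Id-R^{*}$ and again using \ref{fix2} gives $\overline{\ran}(\Id-R^{*})=(\ker(\Id-R))^{\perp}=(\fix R)^{\perp}=D^{\perp}$.

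None of the steps is genuinely hard; the only moment that needs care is the short chain of (in)equalities in \ref{fix2}, which must be arranged so that the Cauchy--Schwarz bound on $\norm{R^{*}x}$ and the adjoint identity $\scal{R^{*}x}{x}=\scal{x}{Rx}$ line up cleanly with $\norm{x}^{2}$. Once this is set up, parts \ref{fix1} and \ref{fix3} are essentially bookkeeping.
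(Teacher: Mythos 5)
Your proof is correct, but it reaches parts \ref{fix2} and \ref{fix3} by a different mechanism than the paper. The paper first records (in its Proposition 2.1) that $\Id-R$ is monotone and bounded, and then cites a result on monotone bounded linear operators \cite[Proposition~20.17]{BC2017} to get $\ker(\Id-R)=\ker(\Id-R^{*})$ and the coincidence of the closed ranges in one stroke; the underlying engine there is monotonicity of the displacement mapping. You instead prove the key identity $\fix R=\fix R^{*}$ directly from nonexpansiveness, via the expansion of $\norm{R^{*}x-x}^{2}$ together with $\norm{R^{*}}=\norm{R}\le 1$ and $\scal{R^{*}x}{x}=\scal{x}{Rx}=\norm{x}^{2}$, and then finish \ref{fix3} with the standard duality $\overline{\ran}\,A=(\ker A^{*})^{\perp}$. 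Your argument is self-contained and elementary (it never needs to know that $\Id-R$ is monotone), while the paper's route is shorter on the page but leans on an external result and on the earlier monotonicity proposition. For \ref{fix1} you also improve on the paper: observing that $D$ is the kernel of the bounded linear operator $\Id-R$ immediately gives a closed linear subspace, whereas the paper verifies linearity elementwise and runs a sequence argument for closedness (which in any case tacitly uses the same continuity you invoke). All steps in your write-up check out.
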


\begin{proof}
	\cref{fix1}: Let $x, y \in D$ such that $x - Rx = 0$ and $y - Ry = 0$. Let $\alpha, \beta \in \RR$. Then 
	\begin{align*}  
	\big( \Id - R \big) \big( \alpha x + \beta y \big) & = \big( \Id - R \big) ( \alpha x ) + \big( \Id - R \big) (\beta y ) \\
	& = \alpha ( x - Rx) + \beta ( y - Ry) \\
	& = 0 + 0 =0.
	\end{align*}
	Therefore, $\alpha x + \beta y \in D$ and hence $D$ is a linear subspace. To show that $D$ is closed, let $ (x_n)$ be a sequence in $D$ such that $(x_n)$ converges to $x$. Then 
	\begin{align*} 
	\lim\limits_{n \to \infty} ( \Id -R) (x - x_n) & = 	\lim\limits_{n \to \infty} ( \Id - R) x - 	\lim\limits_{n \to \infty} ( \Id - R) x_n \\
	& = ( \Id - R) x - ( \Id - R) x = 0.
	\end{align*}
	Therefore, $ x \in D$ and hence $D$ is closed.\\
	\cref{fix2} and \ref{fix3}: It follows from \cref{propr}\ref{fac2}~\&~\ref{fac3} that $ \Id - R$ is monotone and bounded. Hence, $\fix R^* = \fix R= D $ and $\overline{\ran} \big( \Id - R\big) = \overline{\ran} \big( \Id - R^{*}\big) = D^{\perp}$ by \cite[Proposition~20.17]{BC2017}.
\end{proof}

\begin{remark}\label{More2}Suppose that $X = \ell_2 (\NN)$ and that 
	\begin{equation}
	R:X\to X:(x_n)_{n \in \NN} \mapsto \big( ((1 - \eps_n) x_n)\big)_{n\in \NN},
	\end{equation}
	where $(\eps_n)_{n \in \NN}$ lies in $\left] 0 , 1 \right[$ with $\eps_n \to 0$. Then the following hold:
	\begin{enumerate}
		\item\label{ran1} $\Id -R: (x_n)_{n \in \NN} \mapsto \big( \eps_n x_n\big)_{n \in \NN}$ is a compact operator. 
		\item\label{ran2} $D = \Fix R= \{0\}$.
		\item\label{ran3} $\ran (\Id - R)$ is not closed.
		\item\label{ran4} $\ran R$ is a closed subspace.
	\end{enumerate}
\end{remark}

\begin{proof}
	\cref{ran1} and \cref{ran2}: See \cite[PropositionII.4.6]{conway2019course}.
	\cref{ran3}:  It follows from \cite[Proposition~3.4.6]{megginson2012introduction} that $\ran (\Id - R)$ is closed if and only if $\ran (\Id - R)
	$ is finite-dimensional. On the other hand, $X = D^{\perp} = \overline{\ran} (\Id - R)$, i.e., the range of $\Id - R$ is dense in the infinite-dimensional space $X$. Altogether, 
	$$  \ran \big( \Id - R\big) \ \text{is not closed}.$$
	\cref{ran4}: See \cite[Lemma 3.4.20]{megginson2012introduction}.		
\end{proof}

\begin{lemma}\label{T-opr} Suppose that $ \ran ( \Id - R)$ is closed; equivalently,
\begin{empheq}[box=\mybluebox]{equation*} 
{ \ran ( \Id - R) = D^\perp.}
\end{empheq}	
Set 
\begin{empheq}[box=\mybluebox]{equation}\label{Top} 
	{T:= \PR_{D^\perp} ( \Id - R)^{-1} \PR_{D^\perp} - \frac{1}{2} \PR_{D^\perp}.}
\end{empheq} 
	Then, 
	\begin{enumerate}
		\item\label{T-opr00} $\ran ( \Id - R)^{*} = D^{\perp}$.
		\item\label{T-opr01} $T$ is a linear  and continuous.
		\item\label{T-opr001} $T$ is monotone.
		\item\label{T-opr02} $T$ is maximally monotone.
		\item\label{T-opr03} $ \ran T \subseteq  D^{\perp}$, where $D = \ker ( \Id - R)$.
		\item\label{T-opr04} $ \PR_{D^\perp} T = T \PR_{D^\perp} = T$.	
	\end{enumerate}
\end{lemma}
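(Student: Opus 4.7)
The plan is to first extract the hidden bounded linear inverse lurking in the set-valued expression \eqref{Top}. I would observe that the restriction $S := (\Id - R)|_{D^{\perp}} \colon D^{\perp} \to D^{\perp}$ is a continuous linear bijection: injectivity follows from $D \cap D^{\perp} = \{0\}$, and surjectivity from the hypothesis $\ran(\Id - R) = D^{\perp}$ together with the fact that $\Id - R$ annihilates $D$. The open mapping theorem then supplies a bounded linear inverse $L \colon D^{\perp} \to D^{\perp}$, and a short computation shows $(\Id - R)^{-1} P_{D^{\perp}} x = L(P_{D^{\perp}} x) + D$, which $P_{D^{\perp}}$ collapses to the single point $L(P_{D^{\perp}} x)$. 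Hence the defining formula simplifies to
\[
T \;=\; L \circ P_{D^{\perp}} \;-\; \tfrac{1}{2}\, P_{D^{\perp}},
\]
which is manifestly linear and continuous on $X$, proving \ref{T-opr01}.

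With this single-valued form in hand, items \ref{T-opr00}, \ref{T-opr03}, and \ref{T-opr04} reduce to bookkeeping. Claim \ref{T-opr00} follows from the closed-range theorem (closedness of $\ran(\Id - R)$ passes to $\ran(\Id - R)^{*}$) combined with \cref{More1}\ref{fix3}. Claim \ref{T-opr03} is immediate since both $L P_{D^{\perp}}$ and $P_{D^{\perp}}$ take values in $D^{\perp}$; then \ref{T-opr04} follows from idempotence of $P_{D^{\perp}}$ (giving $T P_{D^{\perp}} = T$) together with \ref{T-opr03} (giving $P_{D^{\perp}} T = T$).

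For monotonicity \ref{T-opr001}, I would set $u := P_{D^{\perp}}(x - y)$. Since $Lu, u \in D^{\perp}$ and $P_{D^{\perp}}$ is self-adjoint, the inner product reduces to
\[
\scal{x - y}{Tx - Ty} \;=\; \scal{u}{Lu} \;-\; \tfrac{1}{2}\|u\|^{2}.
\]
By \cref{propr}\ref{fac4}, $(\Id - R)^{-1}$ is $\tfrac{1}{2}$-strongly monotone; applying this to the graph pairs $(u, Lu)$ and $(0, 0)$ yields $\scal{u}{Lu} \geq \tfrac{1}{2}\|u\|^{2}$, so the bracket above is non-negative. Finally \ref{T-opr02} is the standard fact that a monotone continuous linear operator with full domain on a Hilbert space is automatically maximally monotone (see, e.g., \cite[Chapter~20]{BC2017}).

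The main obstacle is the very first step: verifying that the set-valued composition in \eqref{Top} really is single-valued, and recognizing it as a bounded linear operator via the open mapping theorem applied to $(\Id - R)|_{D^{\perp}}$. Once this is laid down, the remaining items are either algebraic manipulations of the formula $T = L\, P_{D^{\perp}} - \tfrac{1}{2} P_{D^{\perp}}$ or direct invocations of \cref{propr}.
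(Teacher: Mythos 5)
Your proposal is correct, and it is more self-contained than the paper's own argument. The paper disposes of linearity and continuity by remarking that $T$ ``is defined using $\PR_{D^\perp}$,'' obtains monotonicity by citing \cite[Example~20.12]{BC2017}, and for \ref{T-opr00} writes $\ran(\Id-R)^*=\ran(\Id-R^*)=\ran(\Id-R)$, a chain that really only holds for closures without a further argument. You instead isolate the bounded linear bijection $S:=(\Id-R)|_{D^\perp}\colon D^\perp\to D^\perp$, invoke the open mapping theorem to get $L:=S^{-1}$, and verify $\PR_{D^\perp}(\Id-R)^{-1}\PR_{D^\perp}x=\{L\PR_{D^\perp}x\}$, so that $T=L\PR_{D^\perp}-\tfrac12\PR_{D^\perp}$ is visibly single-valued, linear and bounded; this fills the genuine gap of explaining why the set-valued composition in \eqref{Top} is an everywhere-defined operator at all. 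Your monotonicity computation, reducing $\scal{x-y}{Tx-Ty}$ to $\scal{u}{Lu}-\tfrac12\norm{u}^2$ with $u=\PR_{D^\perp}(x-y)$ and then applying the $\tfrac12$-strong monotonicity of $(\Id-R)^{-1}$ from \cref{propr}\ref{fac4} to the graph pairs $(u,Lu)$ and $(0,0)$, is a clean elementary substitute for the citation. Your route through the closed range theorem for \ref{T-opr00}, and your handling of \ref{T-opr03} and \ref{T-opr04} via idempotence of $\PR_{D^\perp}$ rather than the paper's slightly circular ``$T$ and $\PR_{D^\perp}$ commute,'' are likewise sound. The two proofs converge only at maximal monotonicity, where both use the standard fact that a continuous, monotone, linear operator with full domain is maximally monotone. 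What your approach buys is rigor and an explicit operator formula for $T$ (which in fact streamlines later results such as \cref{InT}\ref{InT201}); what the paper's buys is brevity at the cost of leaving the single-valuedness of $T$ unjustified.
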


\begin{proof}
\ref{T-opr00}: By using the closeness of $ \ran (\Id - R)$ and ... 
\begin{align*}
\ran ( \Id - R)^{*} & = \ran ( \Id - R^{*}) \\
& = \ran ( \Id - R) \\
& = D^{\perp}.
\end{align*}
\ref{T-opr01}: This is clear because $T$ is defined using $\PR_{D^\perp}$, which is a linear and continuous operator.
\ref{T-opr001}: See \cite[Example~20.12]{BC2017}. \ref{T-opr02}: Combine \ref{T-opr01}, \ref{T-opr001} and \cite[Corollary~20.28]{BC2017}.
\ref{T-opr03}: It follows directly from \eqref{Top}.
\ref{T-opr04}: Since $ \ran T \subseteq D^{ \perp}$ by using \ref{T-opr03}, we obtain 
$$  \PR_{D^\perp} T = T.  $$
Moreover, both $T$ and $\PR_{D^\perp}$ commute and so 
$$  T  \PR_{D^\perp} = \PR_{D^\perp} T = T.$$
\end{proof}

\begin{remark} It is well known that $ \ran (\Id -R)$ is closed if and only if there exists $\alpha > 0$ such that 
	\begin{equation}\label{done1}
	\Big( \forall y \in ( \ker (\Id - R))^\perp = D^\perp \Big) \ \ \norm{ y - Ry} \geq \alpha \norm{y};
	\end{equation}
\end{remark}
\begin{proof}
See \cite[Theorem~8.18]{deutsch2001best}.
\end{proof}

\begin{proposition}\label{Prro} Suppose that \cref{done1} holds, then the operator 
\begin{empheq}[box=\mybluebox]{equation}\label{A-Aopr} {\PR_{D^\perp}( \Id - R)^{-1} : D^\perp \to D^\perp,}
\end{empheq}
\begin{enumerate}
\item\label{prp1} is a linear selection of $T^{-1}$. 
\item\label{prp2} is continuous and its norm is bounded above by $1 / \alpha$.	
\end{enumerate}	
\end{proposition}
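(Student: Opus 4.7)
The plan is to extract both parts of the proposition from the structure of the set-valued inverse of $L := \Id - R$. By \cref{T-opr}\ref{T-opr00} together with the standing closed-range assumption, $\ran L = D^\perp$ and $\ker L = D$, so each $y \in D^\perp$ has nonempty preimage $L^{-1}(y) = x_0 + D$ for any particular $x_0$ satisfying $L x_0 = y$.

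For \ref{prp1}, I would first argue that $\PR_{D^\perp} L^{-1}$ is a single-valued function on $D^\perp$: since $\PR_{D^\perp}$ annihilates $D$, the image of the coset $x_0 + D$ under $\PR_{D^\perp}$ collapses to the singleton $\{\PR_{D^\perp}(x_0)\}$. Linearity then follows by picking preimages additively and homogeneously: if $x_i \in L^{-1}(y_i)$, then $\alpha x_1 + \beta x_2 \in L^{-1}(\alpha y_1 + \beta y_2)$, and $\PR_{D^\perp}$ is itself linear. To verify the selection property, I would substitute $p := \PR_{D^\perp} L^{-1}(y)$ into the definition \eqref{Top} of $T$: since $p \in D^\perp$, the outer $\PR_{D^\perp}$ in \eqref{Top} acts as the identity on $p$, and unwinding the remaining set-valued composition $\PR_{D^\perp} L^{-1}\PR_{D^\perp}$ together with the corrective term $-\tfrac{1}{2}\PR_{D^\perp}$ should give $y \in T(p)$, i.e.\ $p \in T^{-1}(y)$, as required.

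For \ref{prp2}, I would invoke the coercivity bound \eqref{done1} directly. Given $y \in D^\perp$ and any $x \in L^{-1}(y)$, set $p := \PR_{D^\perp}(x)$. Then $Lp = Lx - L\PR_D(x) = y - 0 = y$ because $\PR_D(x) \in D = \ker L$. Applying \eqref{done1} to $p \in D^\perp$ yields $\norm{y} = \norm{p - Rp} \geq \alpha \norm{p}$, whence $\norm{\PR_{D^\perp} L^{-1}(y)} = \norm{p} \leq \norm{y}/\alpha$. Linearity, already established in \ref{prp1}, combined with this bound gives continuity and the claimed operator-norm estimate $1/\alpha$ in one stroke.

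The main obstacle is the selection verification in \ref{prp1}: while well-definedness, single-valuedness, and linearity are essentially bookkeeping once one notes $\ker L = D \subseteq \ker \PR_{D^\perp}$, matching $\PR_{D^\perp} L^{-1}(y)$ against the set-valued composition in \eqref{Top} requires careful handling of the $\tfrac{1}{2}\PR_{D^\perp}$ correction inside $T$. By contrast, the norm estimate in \ref{prp2} is a one-line consequence of the coercivity hypothesis \eqref{done1}.
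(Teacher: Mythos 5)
Your treatment of \ref{prp2} is complete and correct: you take any $x\in(\Id-R)^{-1}y$, replace it by $p=\PR_{D^\perp}x$ (still a preimage, since $\PR_{D}x\in D=\ker(\Id-R)$), and apply \eqref{done1} to get $\norm{p}\le\norm{y}/\alpha$; together with linearity this is exactly what the paper's one-line ``clear from \eqref{done1}'' must mean, and you have supplied the missing details. Likewise, your well-definedness, single-valuedness and linearity arguments at the start of \ref{prp1} are fine: $\ker(\Id-R)=D\subseteq\ker\PR_{D^\perp}$ collapses each coset $x_0+D$ to a single point.

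The genuine gap is precisely the step you flagged and deferred: the claim that unwinding \eqref{Top} ``should give $y\in T(p)$.'' It does not. Write $S:=\PR_{D^\perp}(\Id-R)^{-1}$ and let $y\in D^\perp$ with $p:=Sy\in D^\perp$, so $(\Id-R)p=y$. Since $p\in D^\perp$, the definition \eqref{Top} gives $Tp=\PR_{D^\perp}(\Id-R)^{-1}p-\tfrac{1}{2}p=Sp-\tfrac{1}{2}p$; this involves $S$ applied to $p$ (a preimage of $p$, not of $y$) and there is no reason for it to equal $y$. Concretely, for $R=\PR_{U}$ as in \cref{examp1} one has $S=\Id$ on $U^{\perp}$ and $T=\tfrac{1}{2}\PR_{U^\perp}$, so $T(Sy)=\tfrac{1}{2}y\ne y$ and $Sy=y\notin T^{-1}(y)=2y+U$. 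What your computation does establish (using $(\Id-R)^{-1}y=\tfrac{1}{2}y+Ty+D$ from \cref{The11}\ref{The110}, or \cref{the1}) is that $Sy=\tfrac{1}{2}y+Ty\in(\Id-R)^{-1}y$, i.e.\ $S$ is a linear selection of $(\Id-R)^{-1}$ itself (it coincides with the Moore--Penrose inverse $T+\tfrac{1}{2}\PR_{D^\perp}$ on $D^\perp$), not of $T^{-1}$. So the obstruction is not delicate bookkeeping with the $\tfrac{1}{2}\PR_{D^\perp}$ correction: the assertion \ref{prp1} as literally stated fails, and the paper's bare citation of \eqref{Top} and \cref{T-opr} does not close this either. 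You should either prove the corrected statement (selection of $(\Id-R)^{-1}$) or record a counterexample to the stated one.
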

\begin{proof}
\ref{prp1}: It follows from \cref{Top} and \cref{T-opr}.
\ref{prp2}: Clear from \cref{done1}.
\end{proof}
\begin{theorem}\label{the1} Suppose that $\ran ( \Id - R)$ is closed. Set 
\begin{empheq}[box=\mybluebox]{equation}\label{A-opr} {A:= ( \Id - R)^{-1} - \frac{1}{2} \Id,}
\end{empheq}	
and defined 
\begin{equation}
Q_A: \dom A \to X: y \mapsto \PR_{Ay}y.
\end{equation}
Set
\begin{empheq}[box=\mybluebox]{equation}\label{B-opr} {B:= \PR_{ \dom A} Q_A \PR_{ \dom A}.}
\end{empheq}
Then the following holds;
\begin{enumerate}
\item\label{the102} $ \dom A = D^{\perp}$ and is closed.
\item\label{the101} $A$ is linear relation.
\item\label{the103}  $A$ is maximally monotone.
\item\label{the104}  we have 
\begin{equation*}
( \forall y \in \dom A) \ \ Q_A y = \PR_{D^\perp} ( \Id - R)^{-1} y - \frac{1}{2} \PR_{D^\perp} y.
\end{equation*}
\item\label{the105} $B$ is maximally monotone, linear and continuous.
\item\label{the107}  $A= N_{D^\perp} + B$.
\item\label{the108}  $B = T$.
\item\label{the1007} $\left.B\right|_{\dom A}$ is a selection of $\left.A\right|_{\dom A} $.  	
\end{enumerate} 
\end{theorem}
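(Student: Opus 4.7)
The plan is to peel off the easy structural items first, then compute $Q_A$ explicitly by recognising $Ay$ as a translate of $D$, and finally read the decomposition $A=N_{D^\perp}+B$ off that computation.

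For \ref{the102} I would use $\dom A=\dom(\Id-R)^{-1}=\ran(\Id-R)$; by the standing closedness assumption together with \cref{More1}\ref{fix3} this equals $D^\perp$, which is already closed by \cref{More1}\ref{fix1}. For \ref{the101} the linearity of $R$ makes $\Id-R$ a linear operator, so $(\Id-R)^{-1}$ is a linear relation and adding the scalar multiple $-\tfrac12\Id$ preserves this. \cref{the103} is essentially \cref{propr}\ref{fac8}.

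For \ref{the104}, fix $y\in D^\perp$ and pick any $x_0\in(\Id-R)^{-1}y$. Since $\ker(\Id-R)=D$, the preimage satisfies $(\Id-R)^{-1}y=x_0+D$, and hence
\begin{equation*}
Ay=x_0-\tfrac12 y+D,
\end{equation*}
which is an affine translate of the closed linear subspace $D$. Projecting $y$ onto this affine set gives
\begin{equation*}
Q_Ay=(x_0-\tfrac12 y)+\PR_D\big(y-x_0+\tfrac12 y\big)=(x_0-\tfrac12 y)+\PR_D\big(\tfrac32 y-x_0\big).
\end{equation*}
Using $y\in D^\perp$ so that $\PR_D y=0$, this collapses to $\PR_{D^\perp}x_0-\tfrac12 y$. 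Since $\PR_{D^\perp}x_0$ does not depend on the choice of $x_0\in x_0+D$, it equals $\PR_{D^\perp}(\Id-R)^{-1}y$, and $\tfrac12 y=\tfrac12\PR_{D^\perp}y$, giving the claimed formula.

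With \ref{the104} in hand, substituting $y\in X$ into $B=\PR_{D^\perp}Q_A\PR_{D^\perp}$ and using $\PR_{D^\perp}^2=\PR_{D^\perp}$ directly yields
\begin{equation*}
By=\PR_{D^\perp}(\Id-R)^{-1}\PR_{D^\perp}y-\tfrac12\PR_{D^\perp}y=Ty,
\end{equation*}
which proves \ref{the108}; then \ref{the105} is immediate from \cref{T-opr}\ref{T-opr01} and \ref{T-opr02}. For \ref{the107}, return to the formula $Ay=x_0-\tfrac12 y+D$ for $y\in D^\perp$ and split $x_0=\PR_D x_0+\PR_{D^\perp}x_0$; the $\PR_D x_0$ summand is absorbed into the trailing $D$, leaving $Ay=\PR_{D^\perp}x_0-\tfrac12 y+D=By+D=By+N_{D^\perp}(y)$, while for $y\notin D^\perp$ both sides are empty. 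Finally, \ref{the1007} follows from \ref{the107}: for every $y\in\dom A=D^\perp$, $By\in By+N_{D^\perp}(y)=Ay$.

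The main obstacle is \ref{the104}, which hinges on correctly projecting $y$ onto the affine set $x_0-\tfrac12 y+D$ and then exploiting $y\in D^\perp$ so that the $\PR_D(\tfrac32 y-x_0)$ term collapses to $-\PR_D x_0$; every other item is a comparatively routine consequence once this formula is secured.
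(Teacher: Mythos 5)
Your proposal is correct, and for the items that carry real content it takes a genuinely different route from the paper. Items \ref{the102}, \ref{the101}, \ref{the103}, and \ref{the108} are handled essentially as in the paper (the paper also obtains \ref{the108} by substituting the formula of \ref{the104} into \cref{B-opr}). The divergence is in \ref{the104}, \ref{the105}, \ref{the107}, and \ref{the1007}: the paper invokes the general machinery for monotone linear relations from Bauschke--Wang--Yao \cite{bauschke2010borwein} --- specifically $Q_Ay=\PR_{(A0)^\perp}(Ay)$ from their Proposition~6.2, and the Borwein--Wiersma-type decomposition $A=N_{\dom A}+B$ with $B$ maximally monotone, linear and continuous from their Example~6.4 --- whereas you compute everything by hand from the observation that $Ay=x_0-\tfrac12 y+D$ is a translate of the closed subspace $D=\ker(\Id-R)$, project onto that affine set directly, and then read off $A=N_{D^\perp}+B$ and the selection property $By\in By+N_{D^\perp}(y)=Ay$. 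Your computation of the projection is right (with $y\in D^\perp$ the term $\PR_D\bigl(\tfrac32 y-x_0\bigr)$ collapses to $-\PR_D x_0$, and $\PR_{D^\perp}x_0$ is independent of the representative since two preimages differ by an element of $D$), and it agrees with the paper's $\PR_{D^\perp}(Ay)$. What your approach buys is a self-contained, elementary argument that also makes \ref{the105} a consequence of \ref{the108} together with \cref{T-opr}, and makes \ref{the1007} a one-line consequence of \ref{the107} (cleaner than the paper's computation of the two restrictions); what the paper's approach buys is brevity and a placement of the result inside the standard decomposition theory for linear relations.
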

\begin{proof}
\ref{the102}: From \cref{A-opr} $ \dom A = \ran (\Id - R) = D^{\perp}$, which is closed by the assumption.\\
\ref{the101}: It is clear that $A$ is a linear relation, i.e., $\gra A$ is a linear subspace, that $A0 = D$, and by \ref{the102} the $\dom A = D^{\perp}$ is closed. \\
\ref{the103}: It follows directly from \cref{propr}\ref{fac8}.\\
\ref{the104}: By \cite[Proposition~6.2]{bauschke2010borwein}, we have $ \big( \forall y \in \dom A \big) \ Q_A y = \PR_{ (A0)^{\perp}} (Ay) \in Ay$. Hence, 
\begin{align*}
\big( \forall y \in D^{\perp}\big) \ \ Q_Ay & = \PR_{D^\perp} ( Ay) = \PR_{D^\perp} \Big( ( \Id - R)^{-1} y - \frac{1}{2} y \Big) \\
& = \PR_{D^\perp} ( \Id - R)^{-1} y - \frac{1}{2} \PR_{D^\perp} y.
\end{align*}
\ref{the105}: See \cite[Example~6.4(i)]{bauschke2010borwein}.
\ref{the107}: Combining \ref{the102} and \cite[Example~6.4(iii)]{bauschke2010borwein} gives 
$$  A = N_{\dom A} + B = N_{D^\perp} + B.$$
\ref{the108}: Using \eqref{B-opr}, \ref{the104} and \ref{the102} gives 
\begin{align*}
B & = \PR_{ \dom A} Q_A \PR_{ \dom A} \\
& = \PR_{D^\perp} \Big( \PR_{D^\perp} ( \Id - R)^{-1} - \frac{1}{2} \PR_{D^\perp} \Big) \PR_{D^\perp} \\
& = \PR_{D^\perp} ( \Id - R)^{-1} \PR_{D^\perp} - \frac{1}{2} \PR_{D^\perp} \\
& = T \ \ \ \ \ (\text{from} \ \cref{Top}).
\end{align*}
\ref{the1007}: Using \ref{the102} gives 
\begin{align*}
\left.A\right|_{\dom A} & =   \left.( N_{D^\perp} + B )\right|_{D^\perp} \ \ ( \text{from \ref{the107}} )  \\
& = \left.( N_{D^\perp} +  T )\right|_{D^\perp} \ \ ( \text{from \ref{the108}} ) \\
& = \left.\Big( N_{D^\perp} +  \PR_{D^\perp} ( \Id - R)^{-1} \PR_{D^\perp} - \frac{1}{2} \PR_{D^\perp}  \Big)\right|_{D^\perp} \ \ ( \text{from} \ \cref{Top}) \\
& \equiv D+ D^{\perp} \ \ (\text{because} \ \left.N_{D^\perp}\right|_{D^\perp} \equiv D ),
\end{align*}
and 
\begin{align*} 
\left.B\right|_{\dom A} & = \left.T\right|_{D^\perp} \ \ ( \text{from \ref{the102} and \ref{the108}}) \\
& = \left.\Big(  \PR_{D^\perp} ( \Id - R)^{-1} \PR_{D^\perp} - \frac{1}{2} \PR_{D^\perp}  \Big)\right|_{D^\perp} \ \ ( \text{from \cref{Top}}) \\
& = D^{\perp}.
\end{align*} 
Hence, $\left.B\right|_{\dom A}$ is a selection of $\left.A\right|_{\dom A} $.
\end{proof}

In the next theorem we derive formulas for the inverse and Moore-Penrose inverse of the operator $ ( \Id - R)$.
\begin{theorem}\label{The11} Recall from \cref{Top} and \cref{A-opr} that 
$$	T:= \PR_{D^\perp} ( \Id - R)^{-1} \PR_{D^\perp} - \frac{1}{2} \PR_{D^\perp},$$
and 
$$  A:= ( \Id - R)^{-1} - \frac{1}{2} \Id,  $$
respectively.Then the following hold;
\begin{enumerate} 
\item\label{The110} The set-valued inverse of $ \Id - R$ is 
\begin{empheq}[box=\mybluebox]{equation}\label{Inv} 
{( \Id - R)^{-1} = \frac{1}{2} \Id + T + N_{D^{\perp}}.}
\end{empheq}
\item\label{The111} The Moore-Penrose inverse of $ \Id - R$ is 
\begin{empheq}[box=\mybluebox]{equation}\label{InvMo} {( \Id - R)^{\dagger} =  T + \frac{1}{2} \PR_{D^\perp}.}
\end{empheq}
\end{enumerate} 
\end{theorem}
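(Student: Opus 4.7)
\textbf{Proof plan for Theorem~\ref{The11}.}

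For part~\ref{The110}, the plan is to read the identity directly off the decomposition established in Theorem~\ref{the1}. From the definition \eqref{A-opr} we have $(\Id-R)^{-1} = A + \tfrac{1}{2}\Id$. Theorem~\ref{the1}\ref{the107} gives $A = N_{D^\perp} + B$, and Theorem~\ref{the1}\ref{the108} gives $B = T$. Substituting yields
\[
(\Id-R)^{-1} \;=\; N_{D^\perp} + T + \tfrac{1}{2}\Id,
\]
which is exactly \eqref{Inv}. I should also briefly remark that both sides have domain $\ran(\Id-R) = D^\perp$ (by Lemma~\ref{More1}\ref{fix3} and the closedness assumption), so the equality is consistent as linear relations.

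For part~\ref{The111}, the plan is to use the standard characterization of the Moore--Penrose inverse of a (possibly set-valued) linear relation with closed range: $(\Id-R)^{\dagger} y$ is the unique minimum-norm element of $(\Id-R)^{-1}\bigl(\PR_{\overline{\ran}(\Id-R)}\, y\bigr)$, and it lies in $\ker(\Id-R)^\perp = D^\perp$. Concretely I would proceed as follows. First, since $\ran(\Id-R) = D^\perp$ is closed, $\PR_{\overline{\ran}(\Id-R)} = \PR_{D^\perp}$. Second, apply part~\ref{The110} at the point $\PR_{D^\perp} y \in D^\perp$:
\[
(\Id-R)^{-1}\bigl(\PR_{D^\perp}\, y\bigr) \;=\; \tfrac{1}{2}\PR_{D^\perp} y \;+\; T\PR_{D^\perp} y \;+\; N_{D^\perp}\bigl(\PR_{D^\perp} y\bigr).
\]
Third, since $D^\perp$ is a closed linear subspace and $\PR_{D^\perp} y \in D^\perp$, the normal cone is $N_{D^\perp}(\PR_{D^\perp} y) = (D^\perp)^\perp = D$. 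Using Lemma~\ref{T-opr}\ref{T-opr04} to rewrite $T\PR_{D^\perp} = T$, the set becomes
\[
\bigl(\tfrac{1}{2}\PR_{D^\perp} y + Ty\bigr) + D,
\]
which is an affine subspace parallel to $D$. Fourth, by Lemma~\ref{T-opr}\ref{T-opr03} we have $Ty \in D^\perp$ and plainly $\PR_{D^\perp} y \in D^\perp$, so the ``base point'' $\tfrac{1}{2}\PR_{D^\perp} y + Ty$ already lies in $D^\perp$, which is orthogonal to $D$. Hence the element of minimum norm in this affine set is exactly that base point, which gives \eqref{InvMo}.

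I expect part~\ref{The110} to be essentially a one-line substitution using Theorem~\ref{the1}. The most delicate step is in part~\ref{The111}: correctly identifying which characterization of the Moore--Penrose inverse applies to a linear relation of the form $\tfrac{1}{2}\Id + T + N_{D^\perp}$, and verifying that the minimum-norm selection of $(\Id-R)^{-1}\PR_{D^\perp}$ coincides with $T + \tfrac{1}{2}\PR_{D^\perp}$. The key observation that makes this clean is the orthogonal splitting $X = D \oplus D^\perp$ combined with $\ran T \subseteq D^\perp$, which forces the $D$-component contributed by the normal cone to vanish in the minimum-norm selection.
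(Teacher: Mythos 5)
Your part \ref{The110} is exactly the paper's argument: read \eqref{Inv} off $(\Id-R)^{-1}=A+\tfrac{1}{2}\Id$ together with $A=N_{D^\perp}+B$ and $B=T$ from \cref{the1}\ref{the107}~\&~\ref{the108}. For part \ref{The111} you take a correct but genuinely different route. The paper invokes the composition formula $(\Id-R)^{\dagger}=\PR_{\ran((\Id-R)^*)}\circ(\Id-R)^{-1}\circ\PR_{\ran(\Id-R)}$ from \cite[Proposition~2.1]{bauschke2012compositions} and then simplifies algebraically, using $N_{D^\perp}|_{D^\perp}\equiv D$, $\PR_{D^\perp}D=\{0\}$ and $\PR_{D^\perp}T\PR_{D^\perp}=T$ (\cref{T-opr}\ref{T-opr04}). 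You instead use the minimum-norm-solution characterization of the Moore--Penrose inverse: the solution set of $(\Id-R)x=\PR_{D^\perp}y$ is the affine set $\bigl(\tfrac{1}{2}\PR_{D^\perp}y+Ty\bigr)+D$, whose base point already lies in $D^\perp$ by \cref{T-opr}\ref{T-opr03}, hence is the minimum-norm element. The two characterizations are equivalent (projecting the solution set onto $(\ker(\Id-R))^\perp=D^\perp$ selects precisely the minimum-norm solution), and both arguments rest on the same structural facts about $T$ and $N_{D^\perp}$; yours makes the geometry of the splitting $X=D\oplus D^\perp$ explicit, whereas the paper's is a shorter algebraic computation that outsources that geometry to the cited proposition. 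Two small points worth tightening: write $(\ker(\Id-R))^\perp$ rather than $\ker(\Id-R)^\perp$, and note that the step $N_{D^\perp}(\PR_{D^\perp}y)=(D^\perp)^\perp=D$ uses the closedness of $D$ from \cref{More1}\ref{fix1}.
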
  
\begin{proof}
\ref{The110}: Combining \cref{the1}\ref{the107}~\&~\ref{the108} and \cref{A-opr} gives 
\begin{align*}
( \Id - R)^{-1} - \frac{1}{2} \Id & = A \\
& = N_{D^\perp} + T,
\end{align*}
Hence, 
$$ ( \Id - R)^{-1} = N_{D^\perp} + T + \frac{1}{2} \Id.  $$
\ref{The111}: By using \cite[Proposition~2.1]{bauschke2012compositions} and we obtain 
\begin{align*}
( \Id - R)^{\dagger} & = \PR_{ ( \Id - R)^*} \circ ( \Id - R)^{-1} \circ \PR_{ \ran ( \Id - R)} \\
& = \PR_{D^\perp} \circ ( \Id - R)^{-1} \circ \PR_{ D^\perp}  \ \ ( \text{from \cref{T-opr}\ref{T-opr00}})\\
& = \PR_{D^\perp} \circ \Big(  \frac{1}{2} \Id + T + N_{D^{\perp}} \Big) \circ \PR_{ D^\perp} \ \ ( \text{from \ref{The110}}) \\
& = \PR_{D^\perp} \circ \Big( \frac{1}{2} \PR_{D^\perp} + T \PR_{ D^\perp} + D \Big) \ \ (\text{Because $\left.N_{D^\perp}\right|_{D^\perp} \equiv D$}) \\
& = \frac{1}{2} \PR_{D^\perp} + \PR_{D^\perp} T \PR_{ D^\perp} + 0 \\
& = \frac{1}{2} \PR_{D^\perp} + T \ \ (\text{from \cref{T-opr}\ref{T-opr04}}),
\end{align*}
which verified \cref{InvMo}.
\end{proof}

\begin{proposition}[uniqueness of $T$] Let $T_{ \circ}: X \to X$ be such that
\begin{equation}\label{T0}
( \Id - R)^{-1} = \frac{1}{2}  \Id + T_{\circ} + N_{D^{\perp}},
\end{equation}
 and 
 \begin{equation}\label{T00}
 \PR_{D^\perp} T_{\circ} \PR_{D^{\perp}} = T_{\circ}.
 \end{equation}
 Then $  T_{\circ}  = T$.
\end{proposition}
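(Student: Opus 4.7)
The plan is to compare the two expressions for $(\Id - R)^{-1}$ and exploit the fact that both $T$ and $T_\circ$ are forced to take values in $D^\perp$.

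First I would subtract the formula in \cref{The11}\ref{The110}, namely $(\Id - R)^{-1} = \tfrac{1}{2}\Id + T + N_{D^\perp}$, from the hypothesis \eqref{T0}. For any $y \in X$ we obtain, as subsets of $X$,
\begin{equation*}
T_\circ y + N_{D^\perp}(y) = T y + N_{D^\perp}(y).
\end{equation*}
If $y \notin D^\perp$, both sides are empty and carry no information about $T_\circ y$ vs.\ $T y$. If $y \in D^\perp$, then $N_{D^\perp}(y) = D$, and since $T_\circ y$ and $T y$ are singletons, the equality of the above translated sets forces $T_\circ y - T y \in D$.

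Next I would show that both $T_\circ y$ and $T y$ lie in $D^\perp$ whenever $y \in D^\perp$. For $T_\circ$ this is immediate from hypothesis \eqref{T00}, which gives $\ran T_\circ \subseteq D^\perp$. For $T$ this is \cref{T-opr}\ref{T-opr03}. Combining with the previous step, $T_\circ y - T y \in D \cap D^\perp = \{0\}$, so
\begin{equation*}
(\forall y \in D^\perp) \quad T_\circ y = T y.
\end{equation*}

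Finally I would extend this equality to all of $X$ using the projection identities. For arbitrary $y \in X$, set $z := \PR_{D^\perp} y \in D^\perp$. Then, using \eqref{T00} for $T_\circ$ and \cref{T-opr}\ref{T-opr04} for $T$, together with $T z, T_\circ z \in D^\perp$,
\begin{equation*}
T_\circ y = \PR_{D^\perp} T_\circ \PR_{D^\perp} y = \PR_{D^\perp} T_\circ z = \PR_{D^\perp} T z = T z = T \PR_{D^\perp} y = T y,
\end{equation*}
which yields $T_\circ = T$. The argument is essentially bookkeeping; the only substantive observation is that the two conditions imposed on $T_\circ$ together mimic exactly the two structural properties of $T$ (the decomposition of $(\Id-R)^{-1}$ and invariance under $\PR_{D^\perp}$ on both sides), so no serious obstacle is expected.
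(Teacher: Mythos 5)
Your proof is correct, but it follows a genuinely different route from the paper's. The paper proves uniqueness by direct substitution: it starts from the defining formula \eqref{Top}, $T = \PR_{D^\perp}(\Id-R)^{-1}\PR_{D^\perp} - \tfrac{1}{2}\PR_{D^\perp}$, replaces $(\Id-R)^{-1}$ by the right-hand side of \eqref{T0}, observes that the $\tfrac{1}{2}\PR_{D^\perp}$ terms cancel and that $\PR_{D^\perp}N_{D^\perp}\PR_{D^\perp} \equiv 0$, and concludes $T = \PR_{D^\perp}T_\circ\PR_{D^\perp} = T_\circ$ by \eqref{T00} --- a four-line computation. You instead never invoke the explicit formula for $T$: you compare the two decompositions of $(\Id-R)^{-1}$ pointwise, use the coset structure of $N_{D^\perp}$ to get $T_\circ y - Ty \in D$ on $D^\perp$, intersect with $D^\perp$ (via \cref{T-opr}\ref{T-opr03} and the range consequence of \eqref{T00}) to force equality there, and then extend to $X$ via \cref{T-opr}\ref{T-opr04}. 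What your approach buys is a symmetric, ``axiomatic'' uniqueness argument: it shows that \emph{any} two operators satisfying the two stated conditions must coincide, using only those conditions, and it handles the set-valued cancellation cleanly (comparing cosets of $D$ rather than formally subtracting set-valued maps). What the paper's approach buys is brevity, at the cost of being asymmetric in $T$ and $T_\circ$. One small presentational point: your claim that \eqref{T00} forces $\ran T_\circ \subseteq D^\perp$ deserves a half-sentence of justification (it follows since $T_\circ = \PR_{D^\perp}T_\circ\PR_{D^\perp}$ factors through $\PR_{D^\perp}$), but this is immediate.
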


\begin{proof}
By using \cref{Top}, we have 
\begin{align*}
T & = \PR_{D^\perp} ( \Id - R)^{-1} \PR_{D^\perp} - \frac{1}{2} \PR_{D^\perp} \\
& =  \PR_{D^\perp} \Big(  \frac{1}{2} \Id + T_{\circ}  + N_{D^\perp}\Big) \PR_{D^\perp} - \frac{1}{2} \PR_{D^\perp} \ \ ( \text{from \cref{T0}})\\ 
& = \PR_{D^\perp} T_{\circ} \PR_{D^\perp}\\
& = T_{\circ} \ \ ( \text{from \cref{T00}}),
\end{align*}
as claimed.
\end{proof}

\begin{theorem}\label{InT} Recall from \cref{Top} that 
$$ T =   \PR_{D^\perp} ( \Id - R)^{-1} \PR_{D^\perp} - \frac{1}{2} \PR_{D^\perp} . $$
Then the following hold;
\begin{enumerate}
\item\label{InT1}  $ ( 1/ 2) \Id + T $ is $\frac{1}{2}$-strongly monotone.
\item\label{InT0} $\big( ( 1/ 2) \Id + T\big)^{-1} = 2 J_{2T}.$
\item\label{InT2} $2T + \Id = 2 \PR_{D^\perp} (\Id - R)^{-1} \PR_{D^\perp} + \PR_{D}$.
\item\label{InT201} $J_{2T} = \PR_{D} + \frac{1}{2} ( \Id - R) \PR_{D^\perp}$.
\item\label{InT3} $2J_{2 T} = ( \Id - R) \PR_{D^\perp} + 2 \PR_{D}$.
\item\label{InT4} $( \Id - R) \PR_{D^\perp} + 2 \PR_{D} = \Id - R + 2 \PR_{D}$.
\item\label{InT5} We have 
\begin{empheq}[box=\mybluebox]{equation}\label{T-inver} { \Big( \frac{1}{2} \Id + T \Big)^{-1}=  2J_{2T} = ( \Id - R) \PR_{D^\perp }+ 2 \PR_{D}= \Id - R + 2 \PR_{D}.}
\end{empheq}
\item\label{InT6} $ \left.\Big(  \frac{1}{2}  \Id + T\Big)^{-1}\right|_{D^\perp} = \Id - R.$
\end{enumerate}
\end{theorem}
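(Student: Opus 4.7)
The plan is to handle the eight parts essentially in order, with the central computation being \ref{InT201} (the explicit formula for $J_{2T}$); the remaining identities will reduce to projection algebra or routine manipulation.

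For \ref{InT1}, I would invoke \cref{T-opr}\ref{T-opr001}: since $T$ is monotone, so is $((1/2)\Id + T) - (1/2)\Id = T$, which by definition makes $(1/2)\Id + T$ $(1/2)$-strongly monotone. For \ref{InT0}, I would factor $(1/2)\Id + T = (1/2)(\Id + 2T)$; because $2T$ is linear and maximally monotone by \cref{T-opr}\ref{T-opr01}--\ref{T-opr02}, the resolvent $J_{2T}$ is single-valued, everywhere-defined, and linear. Linearity then yields $((1/2)\Id + T)^{-1}(y) = J_{2T}(2y) = 2J_{2T}(y)$.

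Part \ref{InT2} is pure algebra: substitute \cref{Top} into $2T + \Id$ and apply $\Id = \PR_D + \PR_{D^\perp}$, after which the $\pm\PR_{D^\perp}$ terms cancel and the claimed expression drops out. The key step is \ref{InT201}. To compute $J_{2T}(y) = x$, I would solve $(\Id + 2T)x = y$ using the formula from \ref{InT2}. Applying $\PR_D$ and $\PR_{D^\perp}$ separately yields $\PR_D(x) = \PR_D(y)$ and $\PR_{D^\perp}(\Id - R)^{-1}\PR_{D^\perp}(x) = (1/2)\PR_{D^\perp}(y)$. The hinge is that $\PR_{D^\perp}(\Id - R)^{-1}|_{D^\perp}$ and $(\Id - R)|_{D^\perp}$ are mutually inverse bijections on the closed subspace $D^\perp$: both send $D^\perp$ into $D^\perp$ (since $\ran(\Id - R) = D^\perp$ by \cref{T-opr}\ref{T-opr00}), and their composition equals $\Id|_{D^\perp}$ because $\ker(\Id - R) = D$, so projecting any preimage onto $D^\perp$ picks out the unique representative in $D^\perp$. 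Hence $\PR_{D^\perp}(x) = (1/2)(\Id - R)\PR_{D^\perp}(y)$, and $J_{2T}(y) = \PR_D(y) + (1/2)(\Id - R)\PR_{D^\perp}(y)$.

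Parts \ref{InT3}--\ref{InT6} are immediate. Part \ref{InT3} is $2\times$ \ref{InT201}. For \ref{InT4}, the identity $D = \ker(\Id - R)$ gives $(\Id - R)\PR_D = 0$, hence $(\Id - R)\PR_{D^\perp} = (\Id - R)(\Id - \PR_D) = \Id - R$. Part \ref{InT5} is obtained by chaining \ref{InT0}, \ref{InT3}, and \ref{InT4}. Finally, \ref{InT6} follows by restricting the formula in \ref{InT5} to $D^\perp$, on which $\PR_D$ vanishes and $\PR_{D^\perp}$ acts as the identity. The main obstacle is the inversion step in \ref{InT201}: everything else is projection arithmetic, but that step requires the closed-range hypothesis and the identification of $\PR_{D^\perp}(\Id - R)^{-1}|_{D^\perp}$ as the two-sided inverse of $(\Id - R)|_{D^\perp}$ on $D^\perp$.
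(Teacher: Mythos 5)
Your proposal is correct and follows essentially the same route as the paper: parts \ref{InT1}--\ref{InT2} and \ref{InT3}--\ref{InT6} are handled identically, and for the key part \ref{InT201} the paper likewise decomposes $2T+\Id$ blockwise along $D\oplus D^\perp$ and inverts the $D^\perp$ block. If anything, your justification that $\PR_{D^\perp}(\Id-R)^{-1}|_{D^\perp}$ and $(\Id-R)|_{D^\perp}$ are mutually inverse on $D^\perp$ (via $\ker(\Id-R)=D$ and the closed-range hypothesis) is slightly more careful than the paper's formal manipulation with $\PR_{D^\perp}^{-1}$.
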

\begin{proof} 
\ref{InT1}: Showing that $ \frac{1}{2} \Id + T $ is $(1/2)$-strongly monotone  $  \Leftrightarrow \frac{1}{2} \Id + T - \frac{1}{2} \Id = T$ is montone, which is verified by \cref{T-opr}\ref{T-opr001}.
\ref{InT0}: From \cref{T-opr}\ref{T-opr01}~\&~\ref{T-opr02} and \cite[Lemma~2]{eckstein1992douglas}, we have 
\begin{align*}
\Big(  \frac{1}{2}  \Id + T \Big)^{-1} & = \Big( \frac{1}{2}  ( \Id + 2T)\Big)^{-1} \\
& = 2  \big(  \Id + 2T)^{-1} \\
& = 2  J_{2T} .
\end{align*}
\ref{InT2}: By using \cref{Top} and \cref{T-opr}\ref{T-opr01}, we obtain 
\begin{align*}
2T& = 2 \Big(    \PR_{D^\perp} ( \Id - R)^{-1} \PR_{D^\perp} - \frac{1}{2} \PR_{D^\perp}\Big) \\
& = 2  \PR_{D^\perp} ( \Id - R)^{-1} \PR_{D^\perp} - \PR_{D^\perp},
\end{align*}
hence
\begin{align*}
2T+ \Id & = 2  \PR_{D^\perp} ( \Id - R)^{-1} \PR_{D^\perp} - \PR_{D^\perp} + \Id \\
& =  2  \PR_{D^\perp} ( \Id - R)^{-1} \PR_{D^\perp} + \PR_{D}.
\end{align*}
\ref{InT201}: From \ref{InT2}, we obtain $2T+ \Id = 2  \PR_{D^\perp} ( \Id - R)^{-1} \PR_{D^\perp} + \PR_{D}$. Put differently, 
$$  2T + \Id: D \oplus D^\perp \to D \oplus D^\perp : d \oplus d^\perp \mapsto d + 2 \PR_{D^\perp} ( \Id - R)^{-1} d^\perp. $$
For two vectors $d^\perp$, $e^{\perp}$ in $D^\perp$, we have the equivalences, 
\begin{equation*}
e^\perp = 2 \PR_{D^\perp} ( \Id - R)^{-1} d^{\perp}  \Leftrightarrow d^\perp = \Big(    2 \PR_{D^\perp} ( \Id - R)^{-1} \Big)^{-1} e^\perp ,
 \end{equation*}
 and therefore, 
 \begin{align*}
 d^\perp & = \Big(    2 \PR_{D^\perp} ( \Id - R)^{-1} \Big)^{-1} e^\perp \\
 & = \Big(    2 \big(  \PR_{D^\perp} ( \Id - R)^{-1} \big) \Big)^{-1} e^\perp \\
 & = \frac{1}{2} \Big(  \PR_{D^\perp} ( \Id - R)^{-1} \Big)^{-1} e^\perp \\ 
 & = \frac{1}{2}  \big(   \Id - R \big) \PR^{-1}_{D^\perp} e^\perp \\
 & = \frac{1}{2}  \big(   \Id - R \big) e^\perp.
 \end{align*}
 Hence, 
 \begin{equation*}
 ( 2T+ \Id)^{-1}:  D \oplus D^{\perp} \to D \oplus D^{\perp}: d \oplus d^\perp \mapsto d + \frac{1}{2} ( \Id - R) d^\perp;
 \end{equation*}
 equivalently, 
 \begin{equation*}
 J_{2T} = (2T + \Id)^{-1}: z \mapsto \PR_{D} z + \frac{1}{2} ( \Id - R) \PR_{D^\perp} z.
\end{equation*}
 \ref{InT3}: It follows directly from \ref{InT201}. \ref{InT4}: Because $ \ker ( \Id - R) = D$, we have $ ( \Id - R) \PR_{D} \equiv 0$. Therefore,
 $$   ( \Id - R) \PR_{D^\perp} + 2 \PR_{D} = \Id - R + 2 \PR_{D}.$$
 \ref{InT5}: Combine \ref{InT0}, \ref{InT3}, and \ref{InT4}. \ref{InT6}: From \ref{InT3}, we obtain 
\begin{align*} 
  \left.\Big(  \frac{1}{2}  \Id + T\Big)^{-1}\right|_{D^\perp} & =   \left.\big(  \Id - R + 2 \PR_{D} \big)\right|_{D^\perp} =  \Id - R.
\end{align*} 
\end{proof}

\begin{proposition} Let $ m \in \{ 2, 3, \dots \}$ and assume that $R^{m} = \Id$, i.e., $R$ is an isometry of finite rank $m$.  Assume that $X = R^{m}$ and recall from \cite[Lemma]{Alwadanithesis} that 
\begin{equation}\label{proj}
\PR_{D} = \frac{1}{m}  \sum_{k=0}^{m-1} R^k \ \ \ \ \text{and} \ \ \ \ \PR_{D^\perp} = \Id - \frac{1}{m}  \sum_{k=0}^{m-1} R^k,
\end{equation}
where $D= \fix R$. Then 
\begin{empheq}[box=\mybluebox]{equation}\label{R-s} { \frac{1}{2} \PR_{D^\perp} \big( R+ R^{*} \big) \PR_{D^\perp} = \frac{1}{m} \Big(    - \Id - \sum_{k=2}^{m-2} R^k + \frac{ \max \{ 1, m-2 \}}{2} \big( R+ R^{m-1} \big)\Big)}.
\end{empheq}
\end{proposition}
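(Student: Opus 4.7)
The proof will hinge on three structural facts: (a) since $R^m=\Id$ and $R$ is linear nonexpansive, $R$ is an isometry, so $R^{*}=R^{-1}=R^{m-1}$; (b) $\PR_{D}$ is a polynomial in $R$ by \eqref{proj}, hence $\PR_{D^{\perp}}=\Id-\PR_{D}$ commutes with $R$ and (taking adjoints, or using $R^{*}=R^{m-1}$) with $R^{*}$; and (c) $R^{j}\PR_{D}=\PR_{D}$ for every integer $j$, a cyclic re-indexing of \eqref{proj} via $R^{m}=\Id$.

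First I would use (a)--(c) together with $\PR_{D^{\perp}}^{2}=\PR_{D^{\perp}}$ to collapse the sandwich into a one-sided product:
\[
\PR_{D^{\perp}}\bigl(R+R^{*}\bigr)\PR_{D^{\perp}}
=\bigl(R+R^{m-1}\bigr)\PR_{D^{\perp}}^{2}
=\bigl(R+R^{m-1}\bigr)\PR_{D^{\perp}}.
\]
Next, combining (c) with $(R+R^{m-1})\PR_{D}=2\PR_{D}$ and $\PR_{D^{\perp}}=\Id-\PR_{D}$, I get
\[
\tfrac{1}{2}\PR_{D^{\perp}}(R+R^{*})\PR_{D^{\perp}}
=\tfrac{1}{2}\bigl(R+R^{m-1}\bigr)-\PR_{D}.
\]

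The remaining step is a bookkeeping substitution: I plug in $\PR_{D}=\tfrac{1}{m}\sum_{k=0}^{m-1}R^{k}$ and split this sum, for $m\ge 3$, as $\Id+R+\sum_{k=2}^{m-2}R^{k}+R^{m-1}$. The $\Id$-term contributes $-\tfrac{1}{m}\Id$; each interior power $R^{k}$ with $2\le k\le m-2$ contributes $-\tfrac{1}{m}R^{k}$; and the $R$ and $R^{m-1}$ terms pick up the coefficient $\tfrac{1}{2}-\tfrac{1}{m}=\tfrac{m-2}{2m}$ each. Factoring out $\tfrac{1}{m}$ yields exactly \eqref{R-s} with $\max\{1,m-2\}=m-2$.

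The one genuine obstacle is the degenerate case $m=2$, which is the reason the statement carries $\max\{1,m-2\}$ rather than $(m-2)$. When $m=2$ the interior sum $\sum_{k=2}^{0}$ is empty, and $R$ coincides with $R^{m-1}$, so the coefficient computation differs: one directly checks $\tfrac{1}{2}(R+R)-\tfrac{1}{2}(\Id+R)=-\tfrac{1}{2}\Id+\tfrac{1}{2}R$, which matches the right-hand side of \eqref{R-s} with the choice $\max\{1,0\}=1$. I would handle this as a short separate verification at the end, after the generic $m\ge 3$ computation.
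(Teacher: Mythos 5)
Your proof is correct and follows essentially the same route as the paper's: both reduce the left-hand side to $\tfrac{1}{2}\bigl(R+R^{-1}\bigr)-\PR_{D}$ using $R^{*}=R^{-1}=R^{m-1}$ together with the cyclic identity $R^{j}\PR_{D}=\PR_{D}$, then substitute $\PR_{D}=\tfrac{1}{m}\sum_{k=0}^{m-1}R^{k}$ and treat $m=2$ as a separate check. Your use of commutation and idempotence of $\PR_{D^{\perp}}$ merely shortens the algebra that the paper carries out by direct expansion.
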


\begin{proof}
Noted that $R$ is an isometry $\Rightarrow R^{*} R = R R^{*} = \Id$, so $ R^{-1} = R^{*}$. But also $R$ has rank $m$, hence 
$R^{m-1} = R^{-1} = R^{*}$.  By using these facts, we obtain 
\begin{align*}
\PR_{D^\perp} ( R+ R^{*}) \PR_{D^\perp} & = \PR_{D^\perp}  \big( R+ R^{-1} \big) \PR_{D^\perp} \\
& =  \PR_{D^\perp}  \big( R+ R^{-1} \big)  \Big( \Id - \frac{1}{m}  \sum_{k=0}^{m-1} R^k \Big) \ \ \ \ ( \text{from \cref{proj}}) \\
& = \PR_{D^\perp} \Big(  \big( R+ R^{-1} \big) - \frac{1}{m}   \sum_{k=0}^{m-1}  \big( R+ R^{-1} \big) R^{k} \Big) \\
& = \PR_{D^\perp} \Big(  \big( R+ R^{-1} \big) - \frac{1}{m} \sum_{k=0}^{m-1}   \big( R^{k+1} + R^{k-1} \big) \Big).
\end{align*}
Since $R$ has rank $m$, the following holds:
\begin{equation}\label{Need1}
\sum_{k=0}^{m-1}  R^{k+1} = \sum_{k=0}^{m-1} R^{k-1} = \sum_{k=0}^{m-1} R^{k}.
\end{equation}
Moreover,
\begin{equation}\label{Need2}
R^{l} \sum_{k=0}^{m-1} R^{k} = \sum_{k=0}^{m-1} R^{l+k} = \sum_{k=0}^{m-1} R^{k}
\end{equation}
Thus, 
\begin{equation}\label{Need3}
\big(   R+ R^{-1} \big)  \Big(    \frac{1}{m} \sum_{k=0}^{m-1} R^{k} \Big) = \frac{1}{m} \sum_{k=0}^{m-1} \big(  R^{k+1} + R^{k-1} \big) = \frac{2}{m} \sum_{k=0}^{m-1} R^{k}.
\end{equation}
Therefore, 
\begin{align*}
\PR_{D^\perp} \big( R+ R^* \big) \PR_{D^\perp} & = \PR_{D^\perp}  \Biggl(    \big(   R+ R^{-1}\big)   - \frac{1}{m} \sum_{k=0}^{m-1}  \big(    R^{k+1} + R^{k-1}\big)   \Biggl) \\
& = \PR_{D^\perp} \Biggl(   \big(  R+ R^{-1} \big)   - \frac{2}{m}  \sum_{k=0}^{m-1}  R^k  \Biggl)  \\
& = \Biggl(  \Id - \frac{1}{m}  \sum_{k=0}^{m-1}  R^k  \Biggl) \Biggl(  \big(    R+ R^{-1}\big)  - \frac{2}{m}    \sum_{k=0}^{m-1} R^k \Biggl) \\
& = \Biggl(   \big(   R+ R^{-1}\big)  - \frac{2}{m}   \sum_{k=0}^{m-1} R^{k} \Biggl) - \Biggl(   \frac{1}{m}  \sum_{k=0}^{m-1} R^{k}\Biggl) \Biggl(  \big(    R+ R^{-1}\big)  - \frac{2}{m}   \sum_{k=0}^{m-1} R^k  \Biggl) \\
& = \Biggl(   \big(   R+ R^{-1}\big)  - \frac{2}{m}  \sum_{k=0}^{m-1} R^{k} \Biggl)- \Biggl(   \frac{2}{m} \sum_{k=0}^{m-1} R^k - \frac{2}{m^2}    \sum_{l=0}^{m-1} R^l  \sum_{k=0}^{m-1} R^k \Biggl) \\
& = \Biggl(    \big(    R+R^{-1}\big)  - \frac{2}{m}    \sum_{k=0}^{m-1}  R^k \Biggl)  - \Biggl(  \frac{2}{m}   \sum_{k=0}^{m-1} R^k - \frac{2}{m^2}     \sum_{l=0}^{m-1}  \sum_{k=0}^{m-1} R^k\Biggl) \\
& = \Biggl(   \big(  R+ R^{-1} \big) - \frac{2}{m}   \sum_{k=0}^{m-1}  R^k  \Biggl) - \Biggl(   \frac{2}{m}   \sum_{k=0}^{m-1}  R^k - \frac{2m}{m^2}   \sum_{k=0}^{m-1}  R^k \Biggl) \\
& = \Biggl(    \big(   R+ R^{-1} \big)  - \frac{2}{m}   \sum_{k=0}^{m-1}  R^k  \Biggl) - \Biggl(    \frac{2}{m}    \sum_{k=0}^{m-1}  R^k -  \frac{2}{m}    \sum_{k=0}^{m-1}  R^k  \Biggl) \\
& = \Biggl(    \big(  R+ R^{-1} \big)  - \frac{2}{m}   \sum_{k=0}^{m-1}  R^k \Biggl).
\end{align*}
First: assume that $m > 2$. Therefore, $ \max \{ 1, m-2 \} = m - 2$. Then 
\begin{align*}
\PR_{D^\perp} \big( R+ R^* \big) \PR_{D^\perp} & = \big(  R+ R^{-1} \big) - \frac{2}{m} \sum_{k=0}^{m-1}  R^k \\
& = \frac{2}{m} \Biggl(   \frac{m}{2}   \big(  R+ R^{-1}\big)   -   \sum_{k=0}^{m-1}  R^k  \Biggl) \\
& = \frac{2}{m}  \Biggl(    \Biggl(   \frac{m}{2} - 1\Biggl)   \big( R+ R^{-1} \big) - \Id -  \sum_{k=2}^{m-2} R^k \Biggl) \\
& = \frac{2}{m}  \Biggl(   - \Id + \frac{m-2}{2}   \big(  R+ R^{m-1}\big)  -  \sum_{k=2}^{m-2} R^k \Biggl),  
\end{align*}
which prove \cref{R-s} when $m > 2$. \\
Next, assume that $m = 2$. Then $\max \{ 1, m-1  \} = 1$ and $R^{-1} = R^{2-1} = R$. Therefore, 
\begin{align*}
\PR_{D^\perp} \big( R+ R^* \big) \PR_{D^\perp} & = \big( R+ R^{-1}\big) - \frac{2}{m}   \sum_{k=0}^{m-1}  R^k \\
& = 2R- \frac{2}{2} \big(   \Id + R \big) \\
& = 2R-\Id - R \\
& = R- \Id.
\end{align*}
On the other hand, 
\begin{align*}
\frac{2}{m} \Biggl(  - \Id + \frac{ \max \{ 1, m-2 \}}{2}  \big(   R+ R^{m-1}\big) -   \sum_{k=2}^{m-2}  R^k \Biggl) & = \frac{2}{2}  \Biggl(   - \Id + \frac{1}{2}  \big(  R+ R \big) -  \sum_{k=2}^{0}  R^k \Biggl) \\
& = - \Id + \frac{1}{2} ( 2R) - 0 \\
& = - \Id + R,
\end{align*}
so equality holds when $m =2$.
\end{proof}
\section{Examples}
\begin{example}[isometry of finite rank] Let $m \in \{ 2, 3, \dots \}$ and assume that 
\begin{equation}\label{isom-R}
R^m = \Id.
\end{equation}
Then the results in \cref{Section2} were derived already in \cite{alwadani2021resolvents}. Moreover, the work there based on exploiting \cref{isom-R} yielded to \cref{proj} and 
\begin{equation}\label{pps}
T= \frac{1}{2m}   \sum_{k=1}^{m-1} \big( m - 2k \big)R^k = -T^*,
\end{equation}
which is always skew right-shift operator, $T$ is symmetric only when $m = 2$.
\end{example}

\begin{example}\label{examp1} Let $U$ be a closed subspace of $X$ and suppose that 
\begin{equation}\label{U}
R= \PR_{U}.
\end{equation}
Then 
\begin{enumerate}
\item\label{Ex1} $D = U$.
\item\label{Ex2} $ \Id - R = \PR_{U^\perp}$.
\item\label{Ex3} $ \ran (\Id - R) = D^\perp$ is closed.
\item\label{Ex4} $ \big(  \Id - R \big)^{-1} = \Id + N_{U}$.
\item\label{Ex5} $T = \frac{1}{2} \PR_{U^\perp} = T^*$.
\item\label{Ex6} $T$ is always symmetric, but skew only when $U = X$.
\end{enumerate}
\end{example}

\begin{proof}
\ref{Ex1}: $ D = \fix R = \fix \PR_{U} = \{ x \in X \mid x = \PR_{U} x\} = U$. \ref{Ex2}: $ \Id - R = \Id - \PR_{U} = \PR_{U^\perp}$. \ref{Ex3}: By using \ref{Ex2}, we obtain $ \ran ( \Id - R) = \ran ( \Id - \PR_{U}) = U^{\perp} = D^\perp$. \ref{Ex4}: From \cite[Example~1]{BC2017}, we have $\big(  \Id - R \big)^{-1}  = \big(  \Id - \PR_{U} \big)^{-1} =  \PR^{-1}_{U^\perp} = \Id + N_{U^\perp}$. \\
\ref{Ex5}: By using \cref{Top}, we have 
\begin{align*}
T & = \PR_{D^\perp} ( \Id - R)^{-1} \PR_{D^\perp} - \frac{1}{2} \PR_{D^\perp} \\
& = \PR_{U^\perp} ( \Id + N_{U^\perp}) \PR_{U^\perp} - \frac{1}{2} \PR_{U^\perp}\\
& =  \frac{1}{2} \PR_{U^\perp} \\
& = T^*.
\end{align*}
\ref{Ex6}: Follows from \ref{Ex5}.
\end{proof}

\begin{example}\label{examp2} Let $U$ be a closed subspace of $X$ and suppose that 
\begin{equation}\label{min-U}
R= - \PR_{U}.
\end{equation}
Then 
\begin{enumerate}
\item\label{Ux1} $D = \{ 0 \}$.
\item\label{Ux2} $ \Id - R = \Id + \PR_{U}$.
\item\label{Ux3} $ \ran (\Id - R) = X$ .
\item\label{Ux4} $ \big(  \Id - R \big)^{-1} = \frac{1}{2} \Id + \frac{1}{2} \PR_{U^\perp}$.
\item\label{Ux5} $T = \frac{1}{2} \PR_{U} $.
\end{enumerate}
\end{example}

\begin{proof}
\ref{Ux1}: $ D = \fix R = \fix (-\PR_{U} )= \{ x \in X \mid x = -\PR_{U} x\} =  \{ 0 \}$. \ref{Ux2}:  $ \Id - R = \Id +  \PR_{U}$.
\ref{Ux3}: By \cite[Minty Theorem]{BC2017}, $Id +  \PR_{U} $ has full range $D^{}= X$.
\ref{Ux4}: $\big(  \Id - R \big)^{-1}  = J_{\PR_{U}}  = \frac{1}{2} \PR_{U} + \PR_{U^\perp} = \frac{1}{2} \Id + \frac{1}{2} \PR_{U^\perp}$.
\ref{Ux5}: We have 
\begin{align*}
 T & =  \PR_{D^\perp} ( \Id - R)^{-1} \PR_{D^\perp} - \frac{1}{2} \PR_{D^\perp} \\
 & = \frac{1}{2} \Id  + \frac{1}{2} \Id - \frac{1}{2} \PR_{U^\perp} - \frac{1}{2} \Id \\
 & = \frac{1}{2} \PR_{U}.
\end{align*}
\end{proof}

\begin{example}\label{examp3} Let $U$ be a closed subspace of $X$ and suppose that 
\begin{equation}\label{RU}
R= R_{U}.
\end{equation}
Then 
\begin{enumerate}
\item\label{RRx1} $D = U$.
\item\label{RRx2} $ \Id - R = 2 \PR_{U^\perp}$.
\item\label{RRx3} $ \ran (\Id - R) = D^\perp$ is closed.
\item\label{RRx4} $ \big(  \Id - R \big)^{-1} = \frac{1}{2} \Id + N_{U}$.
\item\label{RRx5} $T = 0$.
\end{enumerate}
\end{example}

\begin{proof}
\ref{RRx1}: $ D = \fix R = \fix (R_{U} )= \{ x \in X \mid x = R_{U} x \} =  \{ x \in X \mid 2 x = 2 \PR_{U} \} = U$. \\
\ref{RRx2}: $ \Id - R =  \Id - R_U = \big(   \PR_U + \PR_{U^\perp}\big) - \big(  \PR_U - \PR_{U^\perp}\big) = 2 \PR_{U^\perp}$.
\ref{RRx3}:  $\ran (\Id - R) = \ran (  2 \PR_{U^\perp} ) = D^\perp$ is closed. 
\ref{RRx4}: $ \big( \Id - R \big)^{-1} = \big(  2 ( \Id  -  \PR_{U})\big)^{-1}  = \frac{1}{2} \Id + N_{U^\perp}$.
\ref{RRx5}: We have 
\begin{align*}
T & =  \PR_{D^\perp} ( \Id - R)^{-1} \PR_{D^\perp} - \frac{1}{2} \PR_{D^\perp} \\ 
& =  \PR_{D^\perp} \Big(   \frac{1}{2} \Id +N_{U^\perp} \Big) \PR_{U^\perp} - \frac{1}{2} \PR_{U^\perp}\\
& = \frac{1}{2} \PR_{U^\perp} - \frac{1}{2} \PR_{U^\perp} \\
& = 0.
\end{align*}
\end{proof}

\begin{example}\label{examp4} Let $U$ be a closed subspace of $X$ and suppose that 
\begin{equation}\label{-RU}
R= -R_{U}.
\end{equation}
Then 
\begin{enumerate}
\item\label{IRx1} $D = \fix \big( - R_{U} \big)=U^{\perp}$.
\item\label{IRx2} $ \Id - R = 2 \PR_{U}$.
\item\label{IRx3} $ \ran (\Id - R) = U$ is closed.
\item\label{IRx4} $ \big(  \Id - R \big)^{-1} = \frac{1}{2} \Id + N_{U}$.
\item\label{IRx5} $T = 0$.
\end{enumerate}
\end{example}

\begin{proof}
\ref{IRx1}: Note that $- R_{U} = R_{U^\perp}$ and we learn from \cref{examp3} that $D= \fix R = U^{\perp}$.
\ref{IRx2}: $\Id - R = \Id - R_{U^\perp} = \big( \PR_U + \PR_{U^\perp}\big) - \big( 2 \PR_{U^\perp} - \Id \big) = \big( \PR_U + \PR_{U^\perp}\big) -  \big( \PR_{U^\perp} - \PR_U  \big) = 2 \PR_{U}$.
\ref{IRx3}:  By using \ref{IRx2}, we have $\ran \big(  \Id - R  \big) = \ran \big(  2 \PR_{U} \big) = D = U$.
\ref{IRx4}: $ \big( \Id - R \big)^{-1} = \big( \Id - (R_{U^\perp}) \big)^{-1} =\big(  \Id - (2 \PR_{U^{\perp}} - \Id )\big)^{-1} = \big( 2 ( \Id - \PR_{U^{\perp}}) \big)^{-1} = \frac{1}{2} \Id + N_{U}$ by \cite[Example]{BC2017}.
\ref{IRx5}: By using \cref{Top}, we have 
\begin{align*}
T & =  \PR_{D^\perp} ( \Id - R)^{-1} \PR_{D^\perp} - \frac{1}{2} \PR_{D^\perp} \\ 
& =  \PR_{D^\perp} \Big(   \frac{1}{2} \Id +N_{U} \Big) \PR_{U^\perp} - \frac{1}{2} \PR_{U^\perp}\\
& = \frac{1}{2} \PR_{U^\perp} - \frac{1}{2} \PR_{U^\perp} \\
& = 0.
\end{align*}
\end{proof}

\end{document}